\theoremstyle{plain}
\newtheorem{thm}{Theorem}[section]                                          
\newtheorem{prop}[thm]{Proposition}                          
\newtheorem{lem}[thm]{Lemma}
\newtheorem{cor}[thm]{Corollary}
\theoremstyle{definition}
\newtheorem{defn}[thm]{Definition}
\theoremstyle{remark}
\newtheorem*{rem*}{Remark}
\begin{document}

\title{Stein's Method and the Laplace Distribution}

\subjclass[2010]{60F05.}
\keywords{Stein's Method, Laplace distribution, geometric summation.}

\maketitle
\begin{center}
\author{John Pike\\ \small Department of Mathematics\\[-0.8ex] \small Cornell University\\ \small \texttt{jpike@cornell.edu}}
\par\end{center}

\begin{center}
\author{Haining Ren\\ \small Department of Mathematics\\[-0.8ex] \small University of Southern California\\ \small \texttt{hren@usc.edu}}
\par\end{center}

\begin{abstract}
Using Stein's method techniques, we develop a framework which allows one to bound the error terms 
arising from approximation by the Laplace distribution and apply it to the study of random sums of mean 
zero random variables. As a corollary, we deduce a Berry-Esseen type theorem for the convergence of 
certain geometric sums. Our results make use of a second order characterizing equation and a distributional 
transformation which is related to zero-biasing.
\end{abstract}

\maketitle

\section{Background and Introduction}

Beginning with the publication of Charles Stein's seminal paper \cite{Stn1}, probabilists and statisticians 
have developed a wide range of techniques based on characterizing equations for bounding the distance between 
the distribution of a random variable $X$ and and that of a random variable $Z$ having some specified target distribution.
The metrics for which these techniques are applicable are of the form 
$d_{\mathcal{H}}(\mathscr{L}(X),\mathscr{L}(Z))=\sup_{h\in\mathcal{H}}\left|\mathbb{E}[h(X)]-\mathbb{E}[h(Z)]\right|$
for some suitable class of functions $\mathcal{H}$, and include as special cases the Wasserstein, Kolmogorov, and 
total variation distances. (The Kolmogorov distance gives the $L^{\infty}$ distance between the associated distribution 
functions, so $\mathcal{H}=\{1_{(-\infty,a]}(x):a\in\mathbb{R}\}$. The total variation and Wasserstein distances 
correspond to letting $\mathcal{H}$ consist of indicators of Borel sets and $1$-Lipschitz functions, respectively.) The 
basic idea is to find an operator $\mathcal{A}$ such that $\mathbb{E}[(\mathcal{A}f)(X)]=0$ for all $f$ belonging to some
sufficiently large class of functions $\mathcal{F}$ if and only if $\mathscr{L}(X)=\mathscr{L}(Z)$. For example, Stein 
showed that $Z\sim\mathcal{N}(0,\sigma^{2})$ if and only if $\mathbb{E}[(\mathcal{A}_{N}f)(Z)]=\mathbb{E}[Zf(Z)-\sigma^{2}f'(Z)]=0$
for all absolutely continuous functions $f$ with $\left\Vert f'\right\Vert _{\infty}<\infty$ \cite{Stn1}, and shortly thereafter 
Louis Chen proved that $Z\sim\textrm{Poisson}(\lambda)$ if and only if 
$\mathbb{E}[(\mathcal{A}_{P}f)(Z)]=\mathbb{E}[Zf(Z)-\lambda f(Z+1)]=0$
for all functions $f$ for which the expectations exist \cite{Chen}. Similar characterizing operators have since been worked 
out for several other common distributions (e.g. \cite{Ehm,Luk,BrPh,PRR}). 

\pagebreak

Given a Stein operator $\mathcal{A}$ for $\mathscr{L}(Z)$, one then shows that for every $h\in\mathcal{H}$, 
the equation $(\mathcal{A}f)(x)=h(x)-\mathbb{E}[h(Z)]$ has solution $f_{h}\in\mathcal{F}$. 
Taking expectations, absolute values, and suprema yields
\[ d_{\mathcal{H}}(\mathscr{L}(X),\mathscr{L}(Z))=\sup_{h\in\mathcal{H}}\left|\mathbb{E}[h(X)]-\mathbb{E}[h(Z)]\right|
=\sup_{h\in\mathcal{H}}\left|\mathbb{E}[(\mathcal{A}f_{h})(X)]\right|.\]
(In practice, this is usually how one proves that $\mathbb{E}[(\mathcal{A}f)(X)]=0$ for $f\in\mathcal{F}$ is sufficient for $\mathscr{L}(X)=\mathscr{L}(Z)$.)

The intuition is that since $\mathbb{E}[(\mathcal{A}f)(Z)]=0$ for $f\in\mathcal{F}$, the distribution of $X$ should be close to 
that of $Z$ when $\mathbb{E}[(\mathcal{A}f)(X)]$ is close to zero. Remarkably, it is often easier to work with the right-hand side of 
the above equation, and the tools for analyzing distances between distributions in this manner are collectively known
as Stein's method. For more on this rich and fascinating subject, the authors recommend \cite{Stn2,Ross,CGS,DiaHol}.

In this paper, we apply the above ideas to the Laplace distribution. For $a\in\mathbb{R}$, $b>0$, a random 
variable $W\sim\textrm{Laplace}(a,b)$ has distribution function 
\[ F_{W}(w;a,b)=\begin{cases}
\frac{1}{2}e^{\frac{w-a}{b}}, & w\leq a\\
1-\frac{1}{2}e^{-\frac{w-a}{b}}, & w\geq a
\end{cases} \]
and density 
\[ f_{W}(w;a,b)=\frac{1}{2b}e^{-\frac{\left|w-a\right|}{b}},\textrm{ }w\in\mathbb{R}.\]
If $W\sim\textrm{Laplace}(0,b)$, then its moments are given by 
\[ \mathbb{E}[W^{k}]=\begin{cases}
0, & k\textrm{ is odd}\\
b^{k}k!, & k\textrm{ is even}
\end{cases}, \]
and its characteristic function is 
\[ \varphi_{W}(t)=\frac{1}{1+b^{2}t^{2}}.\]

This distribution was introduced by P.S. Laplace in 1774, four years prior to his proposal of the {}``second law of errors,'' 
now known as the normal distribution. Though nowhere near as ubiquitous as its younger sibling, the Laplace distribution 
appears in numerous applications, including image and speech compression, options pricing, and modeling sizes of sand 
particles, diamonds, and beans. For more properties and applications of the Laplace distribution, the reader is referred
to the text \cite{KKP}. 

Our interest in the Laplace distribution was sparked by the fact that if $X_{1},X_{2},...$ is a sequence of random variables 
(satisfying certain technical assumptions) and $N_{p}\sim\textrm{Geometric}(p)$ is independent of the $X_{i}$'s, then the 
sum $p^{\frac{1}{2}}\sum_{i=1}^{N_{p}}X_{i}$ converges weakly to the Laplace distribution as $p\rightarrow0$ \cite{KKP}.
Such geometric sums arise in a variety of settings \cite{Kal}, and the general setup (distributional convergence of sums of 
random variables) is exactly the type of problem for which one expects Stein's method computations to yield useful results. 
Indeed, Erol Pek\"{o}z and Adrian R\"{o}llin have applied Stein's method arguments to generalize a theorem due to R\'{e}nyi 
concerning the convergence of sums of a random number of positive random variables to the exponential distribution \cite{PeRo}.
By an analogous line of reasoning, we are able to carry out a similar program for convergence of random sums of certain 
mean zero random variables to the Laplace distribution.

\pagebreak

We begin in Section 2 by introducing a Stein operator which we show completely characterizes the mean zero Laplace 
distribution. Specifically, we prove

\begin{thm}
\label{1.1}
Let $W\sim\textnormal{Laplace}(0,b)$ and define the operator $\mathcal{A}$ by 
\[ (\mathcal{A}f)(x)=f(x)-f(0)-b^{2}f''(x). \]
Then $\mathbb{E}[(\mathcal{A}g)(W)]=0$ for every function $g$ such that $g$ and $g'$ are locally absolutely continuous and 
$\mathbb{E}\left|g'(W)\right|,\mathbb{E}\left|g''(W)\right|<\infty$. 
\smallskip \\
Conversely, if $X$ is any random variable such that $\mathbb{E}[(\mathcal{A}g)(X)]=0$ for every twice-differentiable function $g$ 
with $\left\Vert g\right\Vert _{\infty},\left\Vert g'\right\Vert _{\infty},\left\Vert g''\right\Vert _{\infty}<\infty$ then $X\sim\textnormal{Laplace}(0,b)$.
\end{thm}

In Section 3, we use this characterization to bound the distance to the Laplace distribution. For technical reasons, we work in the 
bounded Lipschitz metric, $d_{BL}$, which is defined in terms of $1$-Lipschitz test functions with sup norm $1$. We begin by defining 
the centered equilibrium transformation $X\mapsto X^{L}$ in terms of the functional equation 
$\mathbb{E}[f(X)]-f(0)=\frac{1}{2}\mathbb{E}[X^{2}]\mathbb{E}[f''(X^{L})]$ for all twice-differentiable functions $f$ such that 
$f$, $f'$, and $f''$ are bounded. After establishing that $X^{L}$ exists whenever $X$ has mean zero and finite nonzero variance, 
we derive the following coupling bound.

\begin{thm}
\label{1.2}
Suppose that $X$ is a random variable with $\mathbb{E}[X]=0$, $\mathbb{E}[X^{2}]=2b^{2}\in(0,\infty)$, and let $X^{L}$ have the centered equilibrium distribution for $X$. Then 
\[ d_{BL}(\mathscr{L}(X),\textnormal{Laplace}(0,b))\leq\frac{b+2}{b}\mathbb{E}\left|X-X^{L}\right|.\]
\end{thm}

Finally, in Section 4 we apply these tools to the study of random sums of mean zero random variables. As a special case, we show

\begin{thm}
\label{1.3}
Let $X_{1},X_{2},...$ be a sequence of independent random variables with 
$\mathbb{E}[X_{i}]=0$, $\mathbb{E}[X_{i}^{2}]=2b^{2}$, $\sup_{i\geq1}\mathbb{E}\left[\left|X_{i}\right|^{3}\right]=\rho<\infty$, 
and let $N\sim\textnormal{Geometric}(p)$ (with strictly positive support) be independent of the $X_{i}'s$. Then 
\[ d_{BL}\left(\mathscr{L}\left(p^{\frac{1}{2}}\sum_{i=1}^{N}X_{i}\right),\textnormal{Laplace}\left(0,b\right)\right)
\leq p^{\frac{1}{2}}\frac{b+2}{b}\left(b\sqrt{2}+\frac{\rho}{6b^{2}}\right)\]
for all\textup{ $p\in(0,1)$}.
\end{thm}

\section{Characterizing the Laplace Distribution}

Our first order of business is to establish a characterizing operator for the Laplace distribution. As is typical in Stein's method constructions, 
we split the proof of Theorem \ref{1.1} into two parts. We begin with 

\begin{lem}
\label{2.1}
Suppose that $W\sim\textnormal{Laplace}(0,b)$. If $g$ and $g'$ are locally absolutely continuous with 
$\mathbb{E}\left|g'(W)\right|,\mathbb{E}\left|g''(W)\right|<\infty$, then \[ \mathbb{E}[g(W)]-g(0)=b^{2}\mathbb{E}[g''(W)].\]
\end{lem}

\begin{proof}
Applying Fubini's theorem twice shows that 
\begin{align*}
\int_{0}^{\infty}g''(x)e^{-\frac{x}{b}}dx & =\int_{0}^{\infty}g''(x)\left(\int_{x}^{\infty}\frac{1}{b}e^{-\frac{y}{b}}dy\right)dx\\
 & =\frac{1}{b}\int_{0}^{\infty}\int_{0}^{y}g''(x)e^{-\frac{y}{b}}dxdy\\
 & =\frac{1}{b}\int_{0}^{\infty}g'(y)e^{-\frac{y}{b}}dy-\frac{g'(0)}{b}\int_{0}^{\infty}e^{-\frac{y}{b}}dy\\
 & =\frac{1}{b}\int_{0}^{\infty}g'(y)\left(\int_{y}^{\infty}\frac{1}{b}e^{-\frac{z}{b}}dz\right)dy-g'(0)\\
 & =\frac{1}{b^{2}}\int_{0}^{\infty}\int_{0}^{z}g'(y)e^{-\frac{z}{b}}dydz-g'(0)\\
 & =\frac{1}{b^{2}}\int_{0}^{\infty}g(z)e^{-\frac{z}{b}}dz-\frac{g(0)}{b^{2}}\int_{0}^{\infty}e^{-\frac{z}{b}}dz-g'(0)\\
 & =\frac{1}{b^{2}}\int_{0}^{\infty}g(z)e^{-\frac{z}{b}}dz-\frac{g(0)}{b}-g'(0).
\end{align*}

Setting $h(y)=g(-y)$, it follows from the previous calculation that
\begin{align*}
\int_{-\infty}^{0}g''(x)e^{\frac{x}{b}}dx & =\int_{0}^{\infty}g''(-y)e^{-\frac{y}{b}}dy=\int_{0}^{\infty}h''(y)e^{-\frac{y}{b}}dy\\
 & =\frac{1}{b^{2}}\int_{0}^{\infty}h(z)e^{-\frac{z}{b}}dz-\frac{h(0)}{b}-h'(0)\\
 & =\frac{1}{b^{2}}\int_{0}^{\infty}g(-z)e^{-\frac{z}{b}}dz-\frac{g(0)}{b}+g'(0)\\
 & =\frac{1}{b^{2}}\int_{-\infty}^{0}g(z)e^{\frac{z}{b}}dz-\frac{g(0)}{b}+g'(0).
\end{align*}

Summing the above expressions gives 
\begin{flalign*}
\mathbb{E}[g''(W)] & =\frac{1}{2b}\int_{-\infty}^{0}g''(x)e^{\frac{x}{b}}dx+\frac{1}{2b}\int_{0}^{\infty}g''(x)e^{-\frac{x}{b}}dx\\
 & =\frac{1}{2b}\left[\left(\frac{1}{b^{2}}\int_{-\infty}^{0}g(z)e^{\frac{z}{b}}dz-\frac{g(0)}{b}+g'(0)\right)\right.\\
 & \qquad\qquad\left.+\left(\frac{1}{b^{2}}\int_{0}^{\infty}g(z)e^{-\frac{z}{b}}dz-\frac{g(0)}{b}-g'(0)\right)\right]\\
 & =\frac{1}{2b}\left(\frac{1}{b^{2}}\int_{-\infty}^{\infty}g(z)e^{-\frac{\left|z\right|}{b}}dz-2\frac{g(0)}{b}\right)
=\frac{1}{b^{2}}\left(\mathbb{E}[g(W)]-g(0)\right).
\end{flalign*}
\end{proof}

Note that since the density of a $\textrm{Laplace}(0,b)$ random variable is given by 
$f_{W}(w)=\frac{1}{2b}e^{-\frac{\left|w\right|}{b}}$, the density method \cite{CGS} suggests the following characterizing 
equation for the Laplace distribution: 
\[ g'(w)-\frac{1}{b}\textrm{sgn}(w)g(w)=g'(w)+\frac{f_{W}'(w)}{f_{W}(w)}g(w)=0,\]
and indeed one can verify that if $W\sim\textrm{Laplace}(0,b)$,
then 
\[ \mathbb{E}[g'(W)]=\frac{1}{b}\mathbb{E}[\textrm{sgn}(W)g(W)]\]
for all absolutely continuous $g$ for which these expectations exist.
Thus if $g'$ is such a function as well, setting $G(w)=\textrm{sgn}(w)\left(g(w)-g(0)\right)$ gives
\begin{align*}
\mathbb{E}[g''(W)] & =\frac{1}{b}\mathbb{E}[\textrm{sgn}(W)g'(W)]=\frac{1}{b}\mathbb{E}[G'(W)]\\
 & =\frac{1}{b^{2}}\mathbb{E}[\textrm{sgn}(W)G(W)]=\frac{1}{b^{2}}\left(\mathbb{E}[g(W)]-g(0)\right),
\end{align*}
so the general form of the equation in Theorem \ref{1.1} can be ascertained by iterating the density method. 

Alternatively, it is known \cite{Ross} that if $Z\sim\textrm{Exponential}(1)$, then $\mathbb{E}[g'(Z)]=\mathbb{E}[g(Z)]-g(0)$ for all absolutely continuous $g$ 
with $\mathbb{E}\left|g'(Z)\right|<\infty$. Thus if $g'$ is also absolutely continuous and $\mathbb{E}\left|g''(Z)\right|<\infty$, then 
\[ \mathbb{E}[g''(Z)]=\mathbb{E}[g'(Z)]-g'(0)=\mathbb{E}[g(Z)]-g(0)-g'(0). \] 
Using this observation, one can derive the equation in Lemma \ref{2.1} by noting that if $J$ is independent of $Z$ with $\mathbb{P}(J=1)=\mathbb{P}(J=-1)=\frac{1}{2}$, then 
$W=bJZ$ has the $\textrm{Laplace}(0,b)$ distribution. 

We include each of these approaches because their analogues may be variously applicable in different situations involving the construction of characterizing equations. 
Observe that there is an iterative step involved in each case. By manipulating the integral defining $\mathbb{E}[g'(W)]$ or using the usual Stein equation for the exponential 
along with the representation $W=bJZ$, one arrives at the first order equation $\mathbb{E}[g'(W)]=\frac{1}{b}\mathbb{E}[\textrm{sgn}(W)g(W)]$ suggested by one application 
of the density method. However, we were not able to get much mileage out of the operator $(\widetilde{\mathcal{A}}g)(x)=g'(x)-\frac{1}{b}\textrm{sgn}(x)g(x)$, while the 
second-order operator $(\mathcal{A}g)(x)=g(x)-g(0)-b^{2}g''(x)$ turned out to be quite effective. This idea of iterating more traditional procedures to obtain higher order 
characterizing equations which are simpler to work with is one of the key insights of this paper.

Now, in order to establish the second part of Theorem \ref{1.1}, we will show that any $X$ satisfying the hypotheses has
\[ d_{BL}\left(\mathscr{L}(X),\textrm{Laplace}(0,b)\right)=0\]
where $d_{BL}$ denotes the bounded Lipschitz distance given by 
\[ d_{BL}\left(\mathscr{L}(X),\mathscr{L}(Y)\right)=\sup_{h\in\mathcal{H}_{BL}}\left|\mathbb{E}[h(X)]-\mathbb{E}[h(Y)]\right|,\]
\[ \mathcal{H}_{BL}=\{h:\left\Vert h\right\Vert _{\infty}\leq1\textrm{ and }\left|h(x)-h(y)\right|\leq\left|x-y\right|\textrm{ for all }x,y\in\mathbb{R}\}.\]
The claim will follow since $d_{BL}$ is a metric on the space of Borel probability measures on $\mathbb{R}$ \cite{vdvWel}.

In keeping with the general strategy laid out in the introduction, we consider the initial value problem 
\[ g(x)-b^{2}g''(x)=h(x)-Wh,\, g(0)=0\]
where $h\in\mathcal{H}_{BL}$ and $Wh:=\mathbb{E}[h(W)]$, $W\sim\textrm{Laplace}(0,b)$.

\begin{lem}
\label{2.2}
For $h\in\mathcal{H}_{BL}$, $W\sim\textnormal{Laplace}(0,b)$, $\widetilde{h}(x)=h(x)-Wh,$ a bounded, twice-differentiable 
solution to the initial value problem \textup{
\[ g(x)-b^{2}g''(x)=\widetilde{h}(x),\; g(0)=0\]
}is given by\textup{ 
\[ g_{h}(x)=\frac{1}{2b}\left(e^{\frac{x}{b}}\int_{x}^{\infty}e^{-\frac{y}{b}}\widetilde{h}(y)dy+e^{-\frac{x}{b}}\int_{-\infty}^{x}e^{\frac{y}{b}}\widetilde{h}(y)dy\right).\]
}This solution satisfies \textup{$\left\Vert g_{h}\right\Vert _{\infty}\leq2$,}
$\left\Vert g_{h}'\right\Vert _{\infty}\leq\frac{2}{b}$, $\left\Vert g_{h}''\right\Vert _{\infty}\leq\frac{4}{b^{2}}$
and $\left\Vert g_{h}'''\right\Vert _{\infty}\leq\frac{b+2}{b^{3}}.$ 
\end{lem}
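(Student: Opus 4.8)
The plan is essentially bookkeeping: all of the analytic content has already been assembled in the discussion preceding the statement, so the proof amounts to collecting it and reading off the consequences.

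First I would record that $g_h$ is well defined and smooth enough. Since $\left\Vert h\right\Vert_\infty \le 1$ forces $|Wh| \le 1$ and hence $|\widetilde{h}(y)| \le 2$, the integrals defining $v_h^1$ and $v_h^2$ converge absolutely; and because $\widetilde{h}$ is continuous (the difference of the Lipschitz function $h$ and a constant), $v_h^1$ and $v_h^2$ are $C^1$, so $g_h = v_h^1(x) e^{x/b} + v_h^2(x) e^{-x/b}$ is $C^2$. The variation-of-parameters computation above then shows that $g_h$ satisfies $g_h(x) - b^2 g_h''(x) = \widetilde{h}(x)$ and $g_h(0) = 0$, and it also produced the displayed formulas for $g_h'$, $g_h''$, and $g_h'''$.

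Next I would deduce the four sup-norm bounds from the pointwise estimates $|v_h^1(x) e^{x/b}| \le 1$ and $|v_h^2(x) e^{-x/b}| \le 1$ derived above (again a consequence of $|\widetilde{h}| \le 2$). Plugging these into $g_h = v_h^1(x) e^{x/b} + v_h^2(x) e^{-x/b}$ gives $\left\Vert g_h\right\Vert_\infty \le 2$; into $g_h' = \frac{1}{b}(v_h^1(x) e^{x/b} - v_h^2(x) e^{-x/b})$ gives $\left\Vert g_h'\right\Vert_\infty \le \frac{2}{b}$; combining them with $|\widetilde{h}| \le 2$ in the expression for $g_h''$ gives $\left\Vert g_h''\right\Vert_\infty \le \frac{4}{b^2}$; and combining them with $\left\Vert h'\right\Vert_\infty \le 1$ (which holds a.e., $h$ being $1$-Lipschitz) in the expression for $g_h'''$ gives $\left\Vert g_h'''\right\Vert_\infty \le \frac{1}{b^2} + \frac{2}{b^3} = \frac{b+2}{b^3}$.

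Finally I would read off the consequences: boundedness of $g_h$, $g_h'$, $g_h''$ makes $E|g_h(X)|$, $E|g_h'(X)|$, $E|g_h''(X)|$ finite for every $X$; $g_h$ and $g_h'$ are $C^1$ with bounded derivative, hence locally Lipschitz, hence absolutely continuous. For the Lipschitz constant of $g_h''$ I would avoid the a.e.-defined $g_h'''$ and instead use the identity $g_h''(x) = b^{-2}(g_h(x) - h(x) + Wh)$, which exhibits $g_h''$ as $b^{-2}$ times the difference of the $C^1$ function $g_h$ with $\left\Vert g_h'\right\Vert_\infty \le \frac{2}{b}$ and the $1$-Lipschitz function $h$; this gives Lipschitz constant at most $b^{-2}\bigl(\frac{2}{b} + 1\bigr) = \frac{b+2}{b^3}$. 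There is no genuine obstacle; the only point deserving a moment's care is precisely this last one, since $h \in \mathcal{H}_{BL}$ need not be differentiable everywhere, and routing the argument through the above identity rather than through $g_h'''$ handles it cleanly.
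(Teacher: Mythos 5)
Your proposal is correct and follows essentially the same route as the paper: both rely on the variation-of-parameters formula, the pointwise bounds $|v_h^1(x)e^{x/b}|\le 1$, $|v_h^2(x)e^{-x/b}|\le 1$, $|\widetilde{h}|\le 2$, and $\|h'\|_\infty\le 1$ to read off the four sup-norm estimates and their consequences. Your remark that the Lipschitz constant of $g_h''$ is more cleanly extracted from the identity $g_h''(x)=b^{-2}\bigl(g_h(x)-h(x)+Wh\bigr)$ than from the a.e.-defined $g_h'''$ is a sensible refinement, and in fact the paper records that same identity parenthetically for exactly this purpose.
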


\begin{proof}
The general solution to the homogeneous equation $g''(x)-b^{-2}g(x)=0$ is given by 
$g_{0}(x)=C_{1}e^{\frac{x}{b}}+C_{2}e^{-\frac{x}{b}}$, so, since the associated Wronskian is nonzero, 
the variation of parameters method suggests that a solution to the inhomogeneous equation 
$g''(x)-b^{-2}g(x)=-b^{-2}\widetilde{h}(x)$ is given by
\[ g_{h}(x)=u_{h}(x)e^{\frac{x}{b}}+v_{h}(x)e^{-\frac{x}{b}}\]
where 
\[ u_{h}(x)=\frac{1}{2b}\int_{x}^{\infty}e^{-\frac{y}{b}}\widetilde{h}(y)dy, \qquad 
v_{h}(x)=\frac{1}{2b}\int_{-\infty}^{x}e^{\frac{y}{b}}\widetilde{h}(y)dy.\]

Differentiation gives
\[ g_{h}'(x)=-\frac{1}{2b}\widetilde{h}(x)+\frac{1}{b}u_{h}(x)e^{\frac{x}{b}}+\frac{1}{2b}\widetilde{h}(x)-\frac{1}{b}v_{h}(x)e^{-\frac{x}{b}}
=\frac{1}{b}\left(u_{h}(x)e^{\frac{x}{b}}-v_{h}(x)e^{-\frac{x}{b}}\right), \]
and thus
\[ g_{h}''(x)=\frac{1}{b^{2}}\left(-\widetilde{h}(x)+u_{h}(x)e^{\frac{x}{b}}+v_{h}(x)e^{-\frac{x}{b}}\right)
=\frac{1}{b^{2}}\left(g_{h}(x)-\widetilde{h}(x)\right), \]
so $g_{h}$ is indeed a solution.

To see that the initial condition is satisfied, we observe that
\begin{align*}
g_{h}(0) & =u_{h}(0)+v_{h}(0)=\frac{1}{2b}\int_{-\infty}^{\infty}e^{-\frac{\left|y\right|}{b}}\widetilde{h}(y)dy=\int_{-\infty}^{\infty}f_{W}(y)\left(h(y)-Wh\right)dy\\
 & =\int_{-\infty}^{\infty}h(y)f_{W}(y)dy-Wh\int_{-\infty}^{\infty}f_{W}(y)dy=Wh-Wh=0.
\end{align*}

Moreover, since $\left\Vert h\right\Vert _{\infty}\leq1$, 
\[ \left|Wh\right|=\left|\frac{1}{2b}\int_{-\infty}^{\infty}h(x)e^{-\frac{\left|x\right|}{b}}dx\right|\leq\frac{1}{2b}\int_{-\infty}^{\infty}e^{-\frac{\left|x\right|}{b}}dx=1,\]
and thus $\left|\tilde{h}(x)\right|\leq\left|h(x)\right|+\left|Wh\right|\leq2$.
Consequently, 
\[ \left|u_{h}(x)e^{\frac{x}{b}}\right|\leq\frac{1}{2b}e^{\frac{x}{b}}\int_{x}^{\infty}2e^{-\frac{y}{b}}dy=1\]
and 
\[ \left|v_{h}(x)e^{-\frac{x}{b}}\right|\leq\frac{1}{2b}e^{-\frac{x}{b}}\int_{-\infty}^{x}2e^{\frac{y}{b}}dy=1,\]
for all $x\in\mathbb{R}$, and the bounds on $\left\Vert g_{h}\right\Vert _{\infty}$ and $\left\Vert g_{h}'\right\Vert _{\infty}$ follow.

Noting that $g_{h}''(x)=\frac{1}{b^{2}}\left(g_{h}(x)-\widetilde{h}(x)\right)$ and thus 
$g_{h}'''(x)=\frac{1}{b^{3}}g_{h}'(x)-\frac{1}{b^2}h'(x)$ completes the proof 
since $\left\Vert \tilde{h}\right\Vert _{\infty}\leq 2$ and $\left\Vert h'\right\Vert _{\infty}\leq 1$.
\end{proof}

\pagebreak

With the preceding result in hand, we can finish of the proof of Theorem \ref{1.1} via

\begin{lem}
\label{2.3}
If $X$ is a random variable such 
\[ \mathbb{E}[g(X)]-g(0)=b^{2}\mathbb{E}[g''(X)]\]
for every twice-differentiable function $g$ with $\left\Vert g\right\Vert _{\infty},\left\Vert g'\right\Vert _{\infty},\left\Vert g''\right\Vert _{\infty}<\infty$,
then $X\sim\textnormal{Laplace}(0,b)$.
\end{lem}

\begin{proof}
Let $W\sim\textrm{Laplace}(0,b)$ and, for $h\in\mathcal{H}_{BL}$, let $g_{h}$ be as in Lemma \ref{2.2}. Because $g_{h}(0)=0$
and $g_{h},g_{h}',g_{h}''$ are bounded, it follows from the above assumptions that 
\[ \mathbb{E}[h(X)]-\mathbb{E}[h(W)]=\mathbb{E}[g_{h}(X)-b^{2}g_{h}''(X)]=0.\]
Taking the supremum over $h\in\mathcal{H}_{BL}$ shows that $d_{BL}\left(\mathscr{L}(X),\mathscr{L}(W)\right)=0$.
\end{proof}

Before moving on, we observe that the reason we are working with the bounded Lipschitz distance is that the bounds on $g_{h}$ and its derivatives depended 
on both $h$ and $h'$ having finite sup norm. As $d_{BL}$ is not especially common (at least explicitly) in the Stein's method literature, we conclude this section 
with a proposition relating it to the more familiar Kolmogorov distance 
\[ d_{K}\left(\mathscr{L}(X),\mathscr{L}(Y)\right)=\sup_{x\in\mathbb{R}}\left|\mathbb{P}\{X\leq x\}-\mathbb{P}\{Y\leq x\}\right|.\]

\begin{prop}
\label{2.4}
If $Z$ is an absolutely continuous random variable whose density, $f_{Z}$, is uniformly bounded by a constant $C<\infty$, 
then for any random variable $X$,
\[ d_{K}\left(\mathscr{L}(X),\mathscr{L}(Z)\right)\leq\frac{C+2}{2}\sqrt{d_{BL}\left(\mathscr{L}(X),\mathscr{L}(Z)\right)}.\]
\end{prop}

\begin{proof}
We first note that the inequality holds trivially if $d_{BL}\left(\mathscr{L}(X),\mathscr{L}(Z)\right)=0$ as $d_{BL}$ and 
$d_{K}$ are metrics. Also, since $d_{K}(P,Q)\leq1$ for all probability measures $P$ and $Q$, $\frac{C+2}{2}\geq1$,
and $d_{BL}\left(\mathscr{L}(X),\mathscr{L}(Z)\right)\geq1$ implies $\sqrt{d_{BL}\left(\mathscr{L}(X),\mathscr{L}(Z)\right)}\geq1$, 
we have 
\begin{alignat*}{1}
d_{K}\left(\mathscr{L}(X),\mathscr{L}(Z)\right) & \leq1\leq\frac{C+2}{2}\sqrt{d_{BL}\left(\mathscr{L}(X),\mathscr{L}(Z)\right)}
\end{alignat*}
whenever $d_{BL}\left(\mathscr{L}(X),\mathscr{L}(Z)\right)\geq1$. Thus it suffices to consider the case where 
$d_{BL}\left(\mathscr{L}(X),\mathscr{L}(Z)\right)\in(0,1)$.

Now, for $x\in\mathbb{R}$, $\varepsilon>0$, write 
\[ h_{x}(z)=1_{(-\infty,x]}(z)=\begin{cases}
1, & z\leq x\\
0, & z>x\end{cases} 
\textrm{ and  }h_{x,\varepsilon}(z)=\begin{cases}
1, & z\leq x\\
1-\frac{z-x}{\varepsilon}, & z\in(x,x+\varepsilon]\\
0, & z>x+\varepsilon
\end{cases}. \]
Then for all $x\in\mathbb{R}$, 
\begin{align*}
\mathbb{E}[h_{x}(X)-h_{x}(Z)] & =\mathbb{E}[h_{x}(X)]-\mathbb{E}[h_{x,\varepsilon}(Z)]+\mathbb{E}[h_{x,\varepsilon}(Z)]-\mathbb{E}[h_{x}(Z)]\\
 & \leq\left(\mathbb{E}[h_{x,\varepsilon}(X)]-\mathbb{E}[h_{x,\varepsilon}(Z)]\right)+\int_{x}^{x+\varepsilon}\left(1-\frac{z-x}{\varepsilon}\right)f_{Z}(z)dz\\
 & \leq\left|\mathbb{E}[h_{x,\varepsilon}(X)]-\mathbb{E}[h_{x,\varepsilon}(Z)]\right|+\frac{C\varepsilon}{2}.
\end{align*}
Since $d_{BL}\left(\mathscr{L}(X),\mathscr{L}(Z)\right)\in(0,1)$, if we take $\varepsilon=\sqrt{d_{BL}\left(\mathscr{L}(X),\mathscr{L}(Z)\right)}\in(0,1)$,
then $\varepsilon h_{x,\varepsilon}\in\mathcal{H}_{BL}$ and thus
\begin{align*}
\mathbb{E}[h_{x}(X)-h_{x}(Z)] & \leq\frac{1}{\varepsilon}\left|\mathbb{E}[\varepsilon h_{x,\varepsilon}(X)]
-\mathbb{E}[\varepsilon h_{x,\varepsilon}(Z)]\right|+\frac{C\varepsilon}{2}\\
 & \leq\frac{1}{\varepsilon}d_{BL}\left(\mathscr{L}(X),\mathscr{L}(Z)\right)+\frac{C\varepsilon}{2}\\
 & =\frac{C+2}{2}\sqrt{d_{BL}\left(\mathscr{L}(X),\mathscr{L}(Z)\right)}.
\end{align*}
A similar argument using 
\begin{align*}
\mathbb{E}[h_{x}(Z)-h_{x}(X)] & =\mathbb{E}[h_{x}(Z)]-\mathbb{E}[h_{x-\varepsilon,\varepsilon}(Z)]
+\mathbb{E}[h_{x-\varepsilon,\varepsilon}(Z)]-\mathbb{E}[h_{x}(X)]\\
 & \leq\frac{C\varepsilon}{2}+\left(\mathbb{E}[h_{x-\varepsilon,\varepsilon}(Z)]-\mathbb{E}[h_{x-\varepsilon,\varepsilon}(X)]\right)
\end{align*}
shows that 
\[ \left|\mathbb{E}[h_{x}(X)]-\mathbb{E}[h_{x}(Z)]\right|\leq\frac{C+2}{2}\sqrt{d_{BL}\left(\mathscr{L}(X),\mathscr{L}(Z)\right)}\]
for all $x\in\mathbb{R}$, and the proposition follows by taking suprema.
\end{proof}

\begin{rem*}
When $C\geq1$, we can take $\varepsilon=\sqrt{\frac{1}{C}d_{BL}\left(\mathscr{L}(X),\mathscr{L}(Z)\right)}$
in the above argument to obtain an improved bound of 
\[ d_{K}\left(\mathscr{L}(X),\mathscr{L}(Z)\right)\leq\frac{3}{2}\sqrt{Cd_{BL}\left(\mathscr{L}(X),\mathscr{L}(Z)\right)}.\]
\end{rem*}

To the best of the authors' knowledge, the above proposition is original, though the proof follows the same basic line of 
reasoning as the well-known bound on the Kolmogorov distance by the Wasserstein distance (see Proposition 1.2 in 
\cite{Ross}). It seems that the primary reason for using the Wasserstein metric, $d_{W}$, is that it enables one to work 
with smoother test functions while still implying convergence in the more natural Kolmogorov distance. Proposition \ref{2.4} shows 
that $d_{BL}$ also upper-bounds $d_{K}$ while enjoying all of the resulting smoothness of Wasserstein test functions and with additional 
boundedness properties to boot. Moreover, the Wasserstein distance is not always well-defined (e.g. if one of the distributions 
does not have a first moment), whereas $d_{BL}$ always exists. Finally, $d_{BL}$ is a fairly natural measure of distance since it 
metrizes weak convergence \cite{vdvWel}. However, we always have 
$d_{W}\left(\mathscr{L}(X),\mathscr{L}(Z)\right)\geq d_{BL}\left(\mathscr{L}(X),\mathscr{L}(Z)\right)$,
and it is possible for a sequence to converge in $d_{BL}$ but not in $d_{W}$ or $d_{K}$. Furthermore, the bounded 
Lipschitz metric does not scale as nicely as the Kolmogorov or Wasserstein distances when the associated random variables 
are multiplied by a positive constant. For the remainder of this paper, we will state our results in terms of $d_{BL}$ with the 
corresponding Kolmogorov bound being implicit therein, though one should note that, as with the Wasserstein bound on 
$d_{K}$, Kolmogorov bounds obtained in this fashion are not necessarily optimal, often giving the root of the true rate.

\section{The Centered Equilibrium Transformation}

Our next task is to use the characterization in Theorem \ref{1.1} to obtain bounds on the error terms resulting from approximation by the Laplace distribution. 
To this end, we introduce the following definition.

\begin{defn}
\label{3.1}
For any nondegenerate random variable $X$ with mean zero and finite variance, we say that the random variable $X^{L}$ has the 
\textit{centered equilibrium distribution} with respect to $X$ if 
\[ \mathbb{E}[f(X)]-f(0)=\frac{1}{2}\mathbb{E}[X^{2}]\mathbb{E}[f''(X^{L})]\]
for all twice-differentiable functions $f$ such that $f$, $f'$, and $f''$ are bounded. We call the map $X\mapsto X^{L}$ the \textit{centered equilibrium transformation}. 
\end{defn}

Note that the centered equilibrium distribution is uniquely defined because \linebreak 
$\mathbb{E}[f''(X)]=\mathbb{E}[f''(Y)]$ for all twice continuously differentiable functions with compact support 
implies $X=_{d}Y$. The nomenclature is in reference to the equilibrium distribution from renewal theory which was used in a similar manner in \cite{PeRo} for a related problem involving 
the exponential distribution. 

Since the characterizing equation for the Laplace distribution involves second derivatives and a variance term, one expects some kind of relation 
between $X^{L}$ and the zero bias distribution for $X$ (defined by $\mathbb{E}[Xf(X)]=\mathbb{E}[X^{2}]\mathbb{E}[f'(X^{z})]$ for all absolutely 
continuous $f$ for which $\mathbb{E}\left|Xf(X)\right|<\infty$ \cite{GoRe1}), in much the same way as the equilibrium distribution is related to the size bias distribution \cite{PeRo}. 
The following theorem shows that this is indeed the case. Moreover, since the zero bias distribution is defined for any $X$ with $\mathbb{E}[X]=0$ and 
$\textrm{Var}(X)\in(0,\infty)$, it will follow that every such random variable has a centered equilibrium distribution.

\begin{thm}
\label{3.2}
Suppose that $X$ has mean zero and variance $\sigma^{2}\in(0,\infty)$. Let $X^{z}$ have the zero bias distribution with respect to $X$ and let 
$B\sim\textnormal{Beta}(2,1)$ be independent of $X^{z}$. Then $X^{L}:=BX^{z}$ satisfies 
\[ \mathbb{E}[f(X)]-f(0)=\frac{\sigma^{2}}{2}\mathbb{E}[f''(X^{L})]\]
for all twice-differentiable $f$ with $\left\Vert f\right\Vert _{\infty},\left\Vert f'\right\Vert _{\infty}, \left\Vert f''\right\Vert _{\infty}<\infty$.
\end{thm}

\begin{proof}
Applying the fundamental theorem of calculus, Fubini's theorem, the definition of $X^{z}$, and the fact that $B$ has density $p(x)=2x1_{[0,1]}(x)$ gives 
\begin{align*}
\mathbb{E}[f(X)]-f(0) & =\mathbb{E}\left[\int_{0}^{1}Xf'(uX)du\right]=\int_{0}^{1}\mathbb{E}\left[Xf'(uX)\right]du\\
 & =\sigma^{2}\int_{0}^{1}u\mathbb{E}[f''(uX^{z})]du=\sigma^{2}\mathbb{E}\left[\int_{0}^{1}uf''(uX^{z})du\right]\\
 & =\frac{\sigma^{2}}{2}\mathbb{E}\left[f''(BX^{z}\right]
\end{align*}
The assumptions ensure that all of the functions are integrable.
\end{proof}

\begin{cor}
\label{3.3}
If $X$ has variance $\mathbb{E}[X^2]=2b^{2}$ and $X^{L}$ has the centered equilibrium distribution with respect to $X$, then $X^{L}$ is absolutely continuous with density 
\[ f_{X^{L}}(x)=\frac{1}{b^{2}}\int_{0}^{1}\mathbb{E}\left[X;X>\frac{x}{v}\right]dv. \]
\end{cor}

\begin{proof}
$X^{L}=_{d}BX^{z}$ with $B$ and $X^{z}$ as in Theorem \ref{3.2}, so the claim follows from the fact \cite{GoRe1} that $X^{z}$ is absolutely continuous with density 
$f_{X^{z}}(x)=\frac{1}{2b^{2}}\mathbb{E}[X;X>x]$ by the usual method of computing the density of a product.
\end{proof}

\begin{rem*}
In an earlier version of this paper, we established the existence of the centered equilibrium distribution by showing that for certain random variables $X$, $X^{L}$ can be obtained by 
iterating the $X-P$ bias transformation from \cite{GoRe2} with $P(x)=\textrm{sgn}(x)$. Though there may be some merit to such a strategy and it provides another example of how 
results for higher order Stein operators may be obtained by iterating more traditional techniques, in our case it required the rather artificial assumption that the variates in the domain of 
the transformation have median zero. Those interested in the iterated $X-P$ bias approach are referred to the article \cite{Dob} by Christian D\"{o}bler, which contains the essential 
technical details of our original argument. 
\end{rem*}

Lemma \ref{2.3} shows that, up to scaling, the mean zero Laplace distribution is the unique fixed point of the centered equilibrium transformation. 
Thus one expects that if a random variable is close to its centered equilibrium transform, then its distribution is close to the Laplace. 
Theorem \ref{1.2} formalizes this intuition.

\begin{proof}[Proof of Theorem 1.2]
If $X$ is a random variable with $\mathbb{E}[X^{2}]=2b^{2}<\infty$ and $X^{L}$ has the centered equilibrium distribution for $X$, then for all 
$h\in\mathcal{H}_{BL},$ taking $g_{h}$ as in Lemma \ref{2.2}, we see that 
\begin{align*}
\left|Wh-\mathbb{E}[h(X)]\right| & =\left|\mathbb{E}[g_{h}(X)-b^{2}g_{h}''(X)]\right|=\left|\mathbb{E}[b^{2}g_{h}''(X^{L})-b^{2}g_{h}''(X)]\right|\\
 & \leq b^{2}\mathbb{E}\left|g_{h}''(X^{L})-g_{h}''(X)\right|\leq b^{2}\left\Vert g_{h}'''\right\Vert \mathbb{E}\left|X^{L}-X\right|\\
 & =\frac{b+2}{b}\mathbb{E}\left|X-X^{L}\right|.
\end{align*}
\end{proof}

For the example in Section 4, we will also need the following complementary result.

\begin{prop}
\label{3.4}
If $Y^{L}$ has the centered equilibrium distribution for $Y$ and $\mathbb{E}[Y^{2}]=2b^{2}$, then
\[ \mathbb{E}\left|Y-Y^{L}\right|\leq \mathbb{E}\left|Y\right|+\frac{1}{6b^{2}}\mathbb{E}[|Y|^{3}].\]
\end{prop}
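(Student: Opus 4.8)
The plan is to avoid constructing $Y^{L}$ explicitly and instead extract everything from the defining identity of the symmetric equilibrium transformation applied to one well‑chosen test function, finishing with the triangle inequality. Since $E[Y^{2}]=2b^{2}<\infty$, Cauchy--Schwarz gives $E|Y|\le\sqrt{E[Y^{2}]}<\infty$, and the triangle inequality $|Y-Y^{L}|\le|Y|+|Y^{L}|$ gives $E|Y-Y^{L}|\le E|Y|+E|Y^{L}|$ (the right side being possibly $+\infty$). Hence it suffices to show $E|Y^{L}|=\frac{1}{6b^{2}}E[|Y|^{3}]$; and if $E[|Y|^{3}]=\infty$ the asserted bound is trivial, so we may assume $E[|Y|^{3}]<\infty$.

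To compute $E|Y^{L}|$, I would take $f(y)=\frac{1}{6}|y|^{3}$. Then $f'(y)=\frac{1}{2}y|y|$ and $f''(y)=|y|$, so $f$ is $C^{1}$, $f'$ is $C^{1}$ (indeed Lipschitz), whence $f$ and $f'$ are absolutely continuous; also $f(0)=0$, $E|f(Y)|=\frac{1}{6}E[|Y|^{3}]<\infty$, and $E|f'(Y)|=\frac{1}{2}E[Y^{2}]=b^{2}<\infty$. Thus $f$ lies in the class for which the symmetric equilibrium identity is assumed to hold, and substituting it yields
\[
\frac{1}{6}E[|Y|^{3}]=E[f(Y)]-f(0)=\frac{1}{2}E[Y^{2}]\,E[f''(Y^{L})]=b^{2}E[|Y^{L}|],
\]
where the last equality uses $E[Y^{2}]=2b^{2}$ and $f''(Y^{L})=|Y^{L}|\ge 0$ (so the expectation is well defined, and finite because the left side is). Therefore $E|Y^{L}|=\frac{1}{6b^{2}}E[|Y|^{3}]$, and combining this with $E|Y-Y^{L}|\le E|Y|+E|Y^{L}|$ gives exactly $E|Y-Y^{L}|\le E|Y|+\frac{1}{6b^{2}}E[|Y|^{3}]$.

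The only place calling for any care is checking that the test function $f(y)=\frac{1}{6}|y|^{3}$ genuinely satisfies the hypotheses in the definition of $Y^{L}$: that $f'$ is absolutely continuous with $f''(y)=|y|$ (the kink of $|y|^{3}$ at the origin is harmless, since $f''$ extends continuously), and that $E|f(Y)|<\infty$ — which is precisely where the reduction to the case $E[|Y|^{3}]<\infty$ is used. Once that is in place, the computation is immediate, so I do not expect a substantial obstacle.
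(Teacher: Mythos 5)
Your proof is correct and follows essentially the same route as the paper: the triangle inequality reduces the claim to computing $E|Y^{L}|$, which is extracted from the defining identity applied to a cubic test function (the paper uses $f(y)=|y|^{3}$ rather than your normalized $f(y)=\frac{1}{6}|y|^{3}$, but this is the same calculation). Your write-up is somewhat more careful about verifying that $f$ lies in the admissible class and about reducing to $E[|Y|^{3}]<\infty$, but the argument is the same.
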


\begin{proof}
We may assume that $\mathbb{E}[|Y|^{3}]<\infty$ as the inequality is trivial otherwise. The result will follow immediately from the triangle 
inequality if we can show that $\mathbb{E}\left|Y^{L}\right|=\frac{1}{6b^{2}}\mathbb{E}\left|Y^{3}\right|$, which is
what one would obtain by formally plugging $f(y)=\left|y\right|^{3}$ into the definition of the transformation $Y\mapsto Y^{L}$.

Of course, neither $f$, $f'$, nor $f''$ is bounded, so we must proceed by approximation. To this end, define
\[ f_{n}(x)=\begin{cases}
\left|x\right|^{3},&  \left|x\right|\leq n\\
n^{3}+3n^{2}(\left|x\right|-n)-\frac{3}{2}(\left|x\right|-n)^{2},&  n<\left|x\right|\leq n^{2}+n\\
n^{3}+3n^{2}(2n^{2}+n-\left|x\right|)-\frac{3}{2}(2n^{2}+n-\left|x\right|)^{2},&  n^{2}+n<\left|x\right|\leq2n^{2}+n\\
(2n^{2}+2n-\left|x\right|)^{3},&  2n^{2}+n<\left|x\right|\leq2n^{2}+2n\\
0,&  \left|x\right|>2n^{2}+2n
\end{cases} \]
By construction, $f_{n}$ is smooth with compact support and satisfies 
$\left|f_{n}^{(i)}\right|\leq\left|f^{(i)}\right|$ for all $n\in\mathbb{N}$, $i=0,1,2$, thus fulfilling the conditions in the 
definition of the centered equilibrium transformation when $\mathbb{E}[|Y|^{3}]<\infty$. Moreover, $f_{n}^{(i)}\rightarrow f^{(i)}$ 
pointwise for $i=0,1,2$, so it follows from the dominated convergence theorem that 
\[ \mathbb{E}[f(Y)]=\lim_{n\rightarrow\infty}\mathbb{E}[f_{n}(Y)]=\lim_{n\rightarrow\infty}b^{2}\mathbb{E}[f_{n}''(Y^{L})].\]
Fatou's lemma shows that $f''(Y^{L})$ is integrable since 
\[ \mathbb{E}\left[f''(Y^{L})\right]=\mathbb{E}\left[\liminf_{n\rightarrow\infty}f_{n}''(Y^{L})\right]\leq\liminf_{n\rightarrow\infty}\mathbb{E}\left[f_{n}''(Y^{L})\right]
=\frac{1}{b^{2}}\mathbb{E}[f(Y)]<\infty,\]
so another application of dominated convergence gives 
\[ \mathbb{E}[f(Y)]=\lim_{n\rightarrow\infty}b^{2}\mathbb{E}[f_{n}''(Y^{L})]=b^{2}\mathbb{E}[f''(Y^{L})].\]
\end{proof}

The same general argument shows that if $Y^{L}$ has the centered equilibrium distribution for $Y$ and $\mathbb{E}\left[\left|Y\right|^{n}\right]<\infty$, 
then $\mathbb{E}[q(Y)]-q(0)=\frac{1}{2}\mathbb{E}[Y^{2}]\mathbb{E}[q''(Y^{L})]$ for any polynomial $q$ of degree at most $n$. 
As is often the case with distributional transformations defined in terms of functional equations, we find it more convenient to define $X\mapsto X^{L}$ 
in terms of a relatively small class of test functions and then argue by approximation when we want to apply the relation more generally.

\section{Random Sums}

The {$p$}-geometric summation of a sequence of random variables $X_{1},X_{2},...$ is defined as 
$S_{p}=X_{1}+X_{2}+...+X_{N_{p}}$ where $N_{p}$ is geometric with success probability $p$ - that is,
$\mathbb{P}\{N_{p}=n\}=p(1-p)^{n-1}$, $n\in\mathbb{N}$ - and is independent of all else. A result due to 
R\'{e}nyi \cite{Ren} states that if $X_{1},X_{2},...$ are i.i.d., positive, nondegenerate random variables with 
$\mathbb{E}[X_{i}]=1$, then $ $$\mathscr{L}(pS_{p})\rightarrow\textrm{Exponential}(1)$ as $p\rightarrow0$. In fact, just as 
the normal law is the only nondegenerate distribution with finite variance that is stable under ordinary summation
(in the sense that if $X,X_{1},X_{2},...$ are i.i.d. nondegenerate random variables with finite variance, then for every 
$n\in\mathbb{N}$, there exist $a_{n}>0$, $b_{n}\in\mathbb{R}$ such that $X=_{d}a_{n}\left(X_{1}+...+X_{n}\right)+b_{n}$),
it can be shown that if $X,X_{1},X_{2},...$ are i.i.d., positive, and nondegenerate with finite variance, then there exists 
$a_{p}>0$ such that $a_{p}\left(X_{1}+...+X_{N_{p}}\right)=_{d}X$ for all $p\in(0,1)$
if and only if $X$ has an exponential distribution. Similarly, if we assume that $Y,Y_{1},Y_{2},...$ are i.i.d., symmetric, 
and nondegenerate with finite variance, then there exists $a_{p}>0$ such that $a_{p}\left(Y_{1}+...+Y_{N_{p}}\right)=_{d}Y$
for all $p\in(0,1)$ if and only if $Y$ has a Laplace distribution. Moreover, it must be the case that $a_{p}=p^{\frac{1}{2}}$. 
In addition, we have an analog of R\'{e}nyi's theorem \cite{KKP}:

\begin{thm}
\label{4.1}
Suppose that $X_{1},X_{2},...$ are i.i.d., symmetric, and nondegenerate random variables with finite variance 
$\sigma^{2}$, and let $N_{p}\sim\textrm{Geometric}(p)$ be independent of the $X_{i}'s$. If 
\[ a_{p}\sum_{i=1}^{N_{p}}X_{i}\rightarrow_{d}X\;\textrm{as }p\rightarrow0,\]
then there exists $\gamma>0$ such that $a_{p}=p^{\frac{1}{2}}\gamma+o(p^{\frac{1}{2}})$ and $X$ has the Laplace 
distribution with mean $0$ and variance $\sigma^{2}\gamma^{2}$.
\end{thm}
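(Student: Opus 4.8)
The plan is to argue entirely with characteristic functions. Set $\varphi(t)=E[e^{itX_{1}}]$; symmetry makes $\varphi$ real and even with $E[X_{1}]=0$, and the finite variance gives the expansion $\varphi(s)=1-\tfrac{1}{2}\sigma^{2}s^{2}+o(s^{2})$ as $s\to 0$. Conditioning the $p$-geometric sum on $N_{p}$ gives, for every $t\in\mathbb{R}$,
\[
\psi_{p}(t):=E\Bigl[\exp\Bigl(it\,a_{p}\!\sum_{i=1}^{N_{p}}X_{i}\Bigr)\Bigr]=\sum_{n=1}^{\infty}p(1-p)^{n-1}\varphi(a_{p}t)^{n}=\frac{p\,\varphi(a_{p}t)}{1-(1-p)\varphi(a_{p}t)}.
\]
By hypothesis $\psi_{p}(t)\to\psi(t)$ for all $t$, where $\psi$ is the characteristic function of $X$; in particular $\psi$ is continuous at $0$ with $\psi(0)=1$, so $\psi(t)\neq 0$ on some interval $(-t_{0},t_{0})$. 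Since $S_{p}$ is symmetric we may assume $a_{p}\ge 0$. First I would read off information near the origin: writing $\psi_{p}(t)=\bigl(1+\tfrac{1-\varphi(a_{p}t)}{p\,\varphi(a_{p}t)}\bigr)^{-1}$ and using $0\le 1-\varphi(a_{p}t)\le 2$, the convergence $\psi_{p}(t)\to\psi(t)\neq 0$ for $|t|<t_{0}$ forces $\varphi(a_{p}t)\to 1$ and then
\[
\frac{1-\varphi(a_{p}t)}{p}\longrightarrow\frac{1}{\psi(t)}-1=:\lambda(t),\qquad |t|<t_{0}.
\]

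The hard part is showing $a_{p}\to 0$. If this fails, then along a sequence $p_{k}\to 0$ one has $a_{p_{k}}\ge\delta>0$, while $\varphi(a_{p_{k}}\,\cdot\,)$, the characteristic function of $a_{p_{k}}X_{1}$, converges to $1$ on $(-t_{0},t_{0})$; by the tightness form of L\'{e}vy's continuity theorem the laws of $a_{p_{k}}X_{1}$ are tight and every subsequential weak limit has characteristic function $\equiv 1$ near $0$, hence equals $\delta_{0}$, so $a_{p_{k}}X_{1}\to 0$ in distribution and therefore in probability. But $\mathbb{P}\{|a_{p_{k}}X_{1}|>\eta\}=\mathbb{P}\{|X_{1}|>\eta/a_{p_{k}}\}\ge\mathbb{P}\{|X_{1}|>\eta/\delta\}$, which is positive for $\eta$ small since $X_{1}$ is nondegenerate, a contradiction. (One can also avoid L\'{e}vy's theorem by integrating $1-\varphi(a_{p_{k}}t)\to 0$ over $t\in(0,t_{0})$ and invoking $\sup_{|u|\ge c}\frac{\sin u}{u}<1$.) Granting $a_{p}\to 0$, for each fixed $t$ we have $a_{p}t\to 0$, so $1-\varphi(a_{p}t)=\tfrac{1}{2}\sigma^{2}t^{2}a_{p}^{2}(1+o(1))$; comparing with the limit above at a single $t_{1}\in(0,t_{0})$ shows that $a_{p}^{2}/p$ converges to $\gamma^{2}:=2\lambda(t_{1})/(\sigma^{2}t_{1}^{2})\in[0,\infty)$, hence $a_{p}=\gamma\sqrt{p}+o(\sqrt{p})$ and $\lambda(t)=\tfrac{1}{2}\sigma^{2}\gamma^{2}t^{2}$ for all $|t|<t_{0}$. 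If $\gamma=0$ then $\psi\equiv 1$ on $(-t_{0},t_{0})$, forcing $X$ to be degenerate; excluding that case, $\gamma>0$.

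Finally, since $a_{p}\to 0$ the expansion $1-\varphi(a_{p}t)=\tfrac{1}{2}\sigma^{2}\gamma^{2}t^{2}\,p\,(1+o(1))$ holds for every $t\in\mathbb{R}$, so
\[
\psi_{p}(t)=\frac{p\,\varphi(a_{p}t)}{\bigl(1-\varphi(a_{p}t)\bigr)+p\,\varphi(a_{p}t)}\longrightarrow\frac{1}{1+\tfrac{1}{2}\sigma^{2}\gamma^{2}t^{2}}\qquad(t\in\mathbb{R}).
\]
By the formula $\varphi_{W}(t)=(1+b^{2}t^{2})^{-1}$ recorded in Section 1, the right-hand side is the characteristic function of $\textrm{Laplace}(0,b)$ with $b^{2}=\tfrac{1}{2}\sigma^{2}\gamma^{2}$, a law with mean $0$ and variance $2b^{2}=\sigma^{2}\gamma^{2}$; hence $X$ has this distribution and the theorem follows. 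The one genuinely delicate step is $a_{p}\to 0$: once it is in hand the characteristic-function bookkeeping is automatic, but ruling it out is where the nondegeneracy of $X_{1}$ enters (the lattice case needing a little care), just as the nondegeneracy of the limit $X$ is what makes $\gamma>0$.
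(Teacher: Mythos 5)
This theorem is stated in the paper as background and is attributed to Kotz, Kozubowski, and Podg\'{o}rski \cite{KKP}; the paper gives no proof of it, so there is no in-text argument to compare yours against. Your characteristic-function proof is the standard route to such geo-stability limit theorems and is essentially correct: the geometric-sum identity $\psi_{p}(t)=p\varphi(a_{p}t)/(1-(1-p)\varphi(a_{p}t))$, the observation that $\psi_{p}(t)\to\psi(t)\neq 0$ near the origin forces $\varphi(a_{p}t)\to 1$ (since otherwise $|\psi_{p}(t)|\le p/\varepsilon\to 0$), the nondegeneracy argument ruling out $a_{p}\not\to 0$, and the Taylor expansion giving $a_{p}^{2}/p\to\gamma^{2}$ and $\psi(t)=(1+\tfrac{1}{2}\sigma^{2}\gamma^{2}t^{2})^{-1}$, which matches $\varphi_{W}(t)=(1+b^{2}t^{2})^{-1}$ with $2b^{2}=\sigma^{2}\gamma^{2}$, all hold up.

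Two small points worth tightening. First, when you claim a subsequential weak limit with characteristic function $\equiv 1$ on $(-t_{0},t_{0})$ must be $\delta_{0}$, one should note that $\varphi_{Y}(t)=1$ forces $Y\in(2\pi/t)\mathbb{Z}$ a.s., and intersecting over two incommensurable $t\in(0,t_{0})$ leaves only $\{0\}$; your alternative via $\sup_{|u|\ge c}|\sin u/u|<1$ is cleaner and sidesteps this. Second, you correctly flag that $\gamma>0$ requires excluding the degenerate limit $X\equiv 0$. This is in fact a gap in the theorem \emph{as transcribed}: the stated hypothesis makes $X_{1}$ nondegenerate but says nothing about $X$, and taking $a_{p}=p$ gives $a_{p}\sum_{i=1}^{N_{p}}X_{i}\to_{d}0$ with $a_{p}=o(p^{1/2})$, so no $\gamma>0$ exists. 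The intended reading, and what your proof delivers, is that if the limit $X$ is nondegenerate then $\gamma>0$ and $X$ is Laplace with the stated parameters; it would be worth making that hypothesis explicit.
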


\pagebreak 

A recent theorem due to Alexis Toda \cite{Tod} gives the following Lindeberg-type conditions for the existence of the 
distributional limit in Theorem \ref{4.1}.

\begin{thm}[Toda]
\label{4.2}
 Let $X_{1},X_{2},...$ be a sequence of independent (but not necessarily identically distributed) random variables such that 
$\mathbb{E}[X_{i}]=0$ and $\textnormal{Var}(X_{i})=\sigma_{i}^{2}$, and let $N_{p}\sim\textnormal{Geometric}(p)$
independent of the $X_{i}'s$. Suppose that 
\[ \lim_{n\rightarrow\infty}n^{-\alpha}\sigma_{n}^{2}=0\:\mathit{for\, some\,0<\alpha<1,}\]
\[ \sigma^{2}:=\lim_{n\rightarrow\infty}\frac{1}{n}\sum_{i=1}^{n}\sigma_{i}^{2}>0\mathit{\: exists,}\]
and for all $\epsilon>0$, 
\[ \lim_{p\rightarrow0}\sum_{i=1}^{\infty}(1-p)^{i-1}p\mathbb{E}[X_{i}^{2};\left|X_{i}\right|\geq\epsilon p^{-\frac{1}{2}}]=0.\]
Then as $p\rightarrow0$, the sum $p^{\frac{1}{2}}\sum_{i=1}^{N_{p}}X_{i}$ converges weakly to the Laplace 
distribution with mean $0$ and variance $\sigma^{2}$.
\end{thm}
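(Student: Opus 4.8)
The plan is to run the apparatus of Sections~2--3 on $W_p:=p^{1/2}\sum_{i=1}^{N_p}X_i$. Conditioning on $N_p$ and using that the $X_i$ are independent and centered, together with $\mathbb{P}\{N_p\geq i\}=(1-p)^{i-1}$, one finds $E[W_p^2]=p\sum_{i=1}^{\infty}(1-p)^{i-1}\sigma_i^2$, and a classical Abelian theorem (Ces\`aro convergence implies Abel convergence) upgrades the hypothesis $\tfrac1n\sum_{i\leq n}\sigma_i^2\to\sigma^2$ to $E[W_p^2]\to\sigma^2$ as $p\to0$. Put $b_p:=(E[W_p^2]/2)^{1/2}$, so that $\textrm{Laplace}(0,b_p)$ has variance $E[W_p^2]$ and $b_p\to\sigma/\sqrt2$. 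Since $W_p$ has mean zero and finite second moment, the symmetric equilibrium variable $W_p^L$ exists: when the $X_i$ are symmetric this is Theorem~3, and in general $W_p$ need not be symmetric but $f\mapsto\frac{2}{E[W_p^2]}(E[f(W_p)]-f(0))=\frac{2}{E[W_p^2]}E[(1-T)W_p^2f''(TW_p)]$, with $T\sim\textrm{Uniform}(0,1)$ independent, is still a normalized positive functional of $f''$ and so determines a genuine law. Theorem~1 then gives $d_{BL}(\mathscr{L}(W_p),\textrm{Laplace}(0,b_p))\leq\frac{b_p+2}{b_p}E|W_p-W_p^L|$, and since $b_p\to\sigma/\sqrt2$ forces $\textrm{Laplace}(0,b_p)\Rightarrow\textrm{Laplace}(0,\sigma/\sqrt2)$, the triangle inequality for $d_{BL}$ reduces the entire theorem to showing $E|W_p-W_p^L|\to0$ as $p\to0$.

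The heart of the matter is to realize $W_p^L$ on the same probability space as $W_p$ so that the two stay close. Here I would exploit the memorylessness of $N_p$ --- which gives a stochastic fixed-point type splitting $W_p=_{d}p^{1/2}X_1+B_p\widetilde W_p$ with $B_p\sim\textrm{Bernoulli}(1-p)$ --- together with the explicit descriptions recalled at the end of Section~3: $W_p^{(P)}=_{d}U_1Y$ and $W_p^L=_{d}U_2Z$, where $U_1,U_2\sim\textrm{Uniform}(0,1)$ and $Y,Z$ are the $|\cdot|$-size-biased versions of $W_p$ and $W_p^{(P)}$. Size-biasing a sum of independent terms amounts to picking one summand with probability proportional to its contribution and resampling it, so --- exactly as in Pek\"oz and R\"ollin's treatment of R\'enyi's theorem --- one should be able to couple $W_p^L$ to $W_p$ so that the two differ only through the resampling of (essentially) a single summand, together with an independent rescaling by a product of uniforms.

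The step I expect to be the main obstacle is pushing this coupling through for \emph{independent but non-identically-distributed} (and possibly non-symmetric) summands and proving $E|W_p-W_p^L|\to0$; the i.i.d. manipulations must be adapted to keep track of which summand is resampled, and all three hypotheses are consumed here. Truncating each $X_i$ at level $\varepsilon p^{-1/2}$, the discrepancy $|W_p-W_p^L|$ splits into the resampled summand and a rescaling error of order $p^{1/2}$ times the truncation level: the bounded parts contribute $O(\varepsilon)$ once the normalization $E[W_p^2]\to\sigma^2$ is invoked, while the tails contribute at most a constant times $p\sum_i(1-p)^{i-1}E[X_i^2;\,|X_i|\geq\varepsilon p^{-1/2}]$, which vanishes by the Lindeberg hypothesis. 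The growth hypothesis $n^{-\alpha}\sigma_n^2\to0$ with $\alpha<1$ enters precisely to prevent any single $\sigma_i^2$ from being comparable to $p^{-1}$ over the range $i\lesssim p^{-1}$ that carries essentially all of the geometric mass, so that the contribution of the one resampled summand, of size $\asymp p^{1/2}\sigma_i$, is uniformly negligible. Sending $p\to0$ and then $\varepsilon\to0$ yields $E|W_p-W_p^L|\to0$, hence $d_{BL}(\mathscr{L}(W_p),\textrm{Laplace}(0,\sigma/\sqrt2))\to0$; since $d_{BL}$ metrizes weak convergence and $\textrm{Laplace}(0,\sigma/\sqrt2)$ has mean $0$ and variance $\sigma^2$, this is exactly the claimed convergence. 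If the coupling turns out to be too delicate, a self-contained alternative is the classical route: write $\varphi_{W_p}(t)=p\sum_{n\geq1}(1-p)^{n-1}\prod_{j\leq n}\varphi_{X_j}(tp^{1/2})$, Taylor expand each factor with Lindeberg control, linearize the logarithm (uniformly valid for $j\lesssim p^{-1}$ by the growth hypothesis), use $\sum_{j\leq n}\sigma_j^2\sim\sigma^2 n$ on the bulk and $|\prod_{j\leq n}\varphi_{X_j}|\leq1$ on the tail, and sum the resulting geometric series to get $\varphi_{W_p}(t)\to(1+\tfrac12\sigma^2t^2)^{-1}$, the Laplace characteristic function, after which L\'evy's continuity theorem concludes.
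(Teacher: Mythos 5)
This statement is not proved in the paper. It is \emph{Toda's} theorem, cited as reference [Tod] (arXiv:1111.1786) and stated in Section~4 purely for comparison. The authors' own random-sum machinery yields Theorem~6 and its corollary Theorem~7, and the paper explicitly concedes that ``this result is not as general as Theorem~5.'' So there is no in-paper proof for you to have matched; what you have attempted is to re-derive Toda's result from the paper's Stein framework, something the authors themselves do not claim to be able to do.

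Evaluated on its own merits, your Stein-method route has a genuine gap at exactly the point you flag as ``the main obstacle.'' The paper's only mechanism for bounding $E|W_p - W_p^L|$ in the random-sum setting is the coupling in Theorem~6, $V^L = \mu^{-1/2}(\sum_{i<M}X_i + X_M^L)$ with $M$ size-biased by $\sigma_m^2$, together with the estimates $E|X_m - X_m^L| \le E|X_m| + \tfrac{1}{3\sigma_m^2}E|X_m|^3$ (Proposition~2, which gives $E|X_m^L| = \tfrac{1}{3\sigma_m^2}E|X_m|^3$ and is \emph{vacuous} when the third moment is infinite) and $E|\sum_{N\wedge M < i \le N\vee M}X_i| \le \sup_i\sigma_i\,E|N-M|^{1/2}$ (vacuous when $\sup_i\sigma_i = \infty$). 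Toda's hypotheses permit both $E|X_i|^3 = \infty$ and $\sup_i\sigma_i = \infty$ (only $\sigma_n^2 = o(n^\alpha)$ with $\alpha<1$ is assumed), so these bounds do not apply, and your truncation sketch does not show how to repair them: the size-biased index $M$ puts weight $\propto \sigma_m^2(1-p)^{m-1}$ on $m$, which distorts the Lindeberg sum $\sum_m p(1-p)^{m-1}E[X_m^2;\,|X_m|\ge\varepsilon p^{-1/2}]$ by an extra uncontrolled factor $\sigma_m^2/\Sigma_p^2$, and the ``rescaling error of order $p^{1/2}$ times the truncation level'' claim is never substantiated by an actual coupling. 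Your observation that the Taylor identity $E[f(W_p)]-f(0)=E[(1-T)W_p^2f''(TW_p)]$ proves existence of $W_p^L$ without the median condition of Theorem~3 is correct and worth noting, but existence is not the bottleneck; control of the coupling distance is, and that is where the argument stops being a proof. The characteristic-function alternative you describe in the final sentences is essentially Toda's actual argument and, if fully carried out, would be a correct (and entirely different) proof; as written, though, it is a one-sentence outline rather than a proof.
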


\begin{rem*}
The original statement of Toda's theorem is slightly more general, allowing for convergence to a possibly asymmetric 
Laplace distribution.
\end{rem*}

In 2011, Pek\"{o}z and R\"{o}llin were able to generalize R\'{e}nyi's theorem by using a distributional transformation inspired 
by Stein's method considerations \cite{PeRo}. Specifically, for a nonnegative random variable $X$ with $\mathbb{E}[X]<\infty$, they 
say that $X^{e}$ has the equilibrium distribution with respect to $X$ if $\mathbb{E}[f(X)]-f(0)=\mathbb{E}[X]\mathbb{E}[f'(X^{e})]$ for all Lipschitz $f$ 
and use this to bound the Wasserstein and Kolmogorov distances to the $\textrm{Exponential}(1)$ distribution. 
The equilibrium distribution arises in renewal theory, but its utility in analyzing convergence to the exponential 
distribution comes from the fact that a Stein operator for the exponential distribution with mean one is given by 
$(\mathcal{A}_{E}f)(x)=f'(x)-f(x)+f(0)$, so $X\sim\textrm{Exponential}(1)$ is the unique fixed point of the equilibrium
transformation \cite{Ross}. This transformation and the similarity between our characterization of the Laplace and 
the above characterization of the exponential inspired our construction of the centered equilibrium transformation, and the 
fact that both distributions are stable under geometric summation led us to parallel their argument for bounding the distance 
between $p$-geometric sums of positive random variables and the exponential distribution in order to obtain corresponding
results for the Laplace case. Our results are summarized in the following theorem.

\begin{thm}
\label{4.3}
Let $N$ be any $\mathbb{N}$-valued random variable with $\mu=\mathbb{E}[N]<\infty$ and let $X_{1},X_{2},...$ be a sequence of 
independent random variables, independent of $N$, with $\mathbb{E}[X_{i}]=0$, and $\mathbb{E}[X_{i}^{2}]=\sigma_{i}^{2}\in(0,\infty)$. Set
$\sigma^{2}=\mathbb{E}\left[\left(\sum_{i=1}^{N}X_{i}\right)^{2}\right]=\mathbb{E}\left[\sum_{i=1}^{N}\sigma_{i}^{2}\right]$
and let $M$ be any $\mathbb{N}$-valued random variable, independent of the $X_{i}'s$ and defined on the same space as 
$N$, satisfying 
\[ \mathbb{P}\{M=m\}=\frac{\sigma_{m}^{2}}{\sigma^{2}}\mathbb{P}\{N\geq m\}.\]
Then 
\begin{multline*}
d_{BL}\left(\mathscr{L}\left(\mu^{-\frac{1}{2}}\sum_{i=1}^{N}X_{i}\right),\textnormal{Laplace}\left(0,\frac{\sigma}{\sqrt{2\mu}}\right)\right)\\
\leq\left(\mu^{-\frac{1}{2}}+\frac{\sqrt{8}}{\sigma}\right)\left(\mathbb{E}\left|X_{M}-X_{M}^{L}\right|
+\sup_{i\geq1}\sigma_{i}\mathbb{E}\left[\left|N-M\right|^{\frac{1}{2}}\right]\right).
\end{multline*}
\end{thm}

\begin{proof}
We first note that 
\[ \sigma^{2}=\sum_{m=1}^{\infty}\mathbb{P}\{N=m\}\sum_{i=1}^{m}\sigma_{i}^{2}=\sum_{m=1}^{\infty}\mathbb{P}\{N\geq m\}\sigma_{m}^{2},\]
so $M$ is well-defined. 

Now, taking $V=\mu^{-\frac{1}{2}}\sum_{i=1}^{N}X_{i}$, we claim that 
$V^{L}=\mu^{-\frac{1}{2}}\left(\sum_{i=1}^{M-1}X_{i}+X_{M}^{L}\right)$
has the centered equilibrium distribution with respect to $V$. (Throughout, $X_{m}^{L}$ is taken to be independent of $M$, $N$, and $X_{k}$ for $k\neq m$.) 
To see that this is so, let $f$ be any function satisfying the assumptions in Definition \ref{3.1}. Then, using the notation 
\[ \mathbb{X}=\{X_{i}\}_{i\geq1},\; g_{m}(\mathbb{X})=f\left(\mu^{-\frac{1}{2}}\sum_{i=1}^{m}X_{i}\right),\; f_{s}(x)=f\left(\mu^{-\frac{1}{2}}s+\mu^{-\frac{1}{2}}x\right),\]
letting $\nu_{m}$ denote the distribution of 
\[ S_{m-1}=\sum_{i=1}^{m-1}X_{i},\]
and observing that, by independence, 
\begin{alignat*}{1}
\mathbb{E}[h''(X_{m}^{L})|S_{m-1}=s] & =\mathbb{E}[h''(X_{m}^{L})]=\frac{2}{\sigma_{m}^{2}}\left(\mathbb{E}[h(X_{m})-h(0)]\right)\\
 & =\frac{2}{\sigma_{m}^{2}}\mathbb{E}[h(X_{m})-h(0)|S_{m-1}=s]
\end{alignat*}
for all $s\in\mathbb{R}$ and all twice differentiable $h$ with $h,h',h''$ bounded, we see that 
\begin{align*}
\mathbb{E}\left[f''\left(\mu^{-\frac{1}{2}}\sum_{i=1}^{m-1}X_{i}+\mu^{-\frac{1}{2}}X_{m}^{L}\right)\right] 
& =\int \mathbb{E}\left[f''(\mu^{-\frac{1}{2}}s+\mu^{-\frac{1}{2}}X_{m}^{L})\left|S_{m-1}=s\right.\right]d\nu_{m}(s)\\
 & =\int \mathbb{E}\left[\mu f_{s}''(X_{m}^{L})\left|S_{m-1}=s\right.\right]d\nu_{m}(s)\\
 & =\frac{2\mu}{\sigma_{m}^{2}}\int \mathbb{E}\left[f_{s}(X_{m})-f_{s}(0)\left|S_{m-1}=s\right.\right]d\nu_{m}(s)\\
 & =\frac{2\mu}{\sigma_{m}^{2}}\int \mathbb{E}\left[f(\mu^{-\frac{1}{2}}s+\mu^{-\frac{1}{2}}X_{m})\right.\\
 & \qquad\qquad\qquad\left.-f(\mu^{-\frac{1}{2}}s)\left|S_{m-1}=s\right.\right]d\nu_{m}(s)\\
 & =\frac{2\mu}{\sigma_{m}^{2}}\mathbb{E}\left[f\left(\mu^{-\frac{1}{2}}\sum_{i=1}^{m}X_{i}\right)-f\left(\mu^{-\frac{1}{2}}\sum_{i=1}^{m-1}X_{i}\right)\right]\\
 & =\frac{2\mu}{\sigma_{m}^{2}}\mathbb{E}\left[g_{m}(\mathbb{X})-g_{m-1}(\mathbb{X})\right]
\end{align*}
for all $m\in\mathbb{N}$, hence 
\begin{align*}
\mathbb{E}[f''(V^{L})] & =\sum_{m=1}^{\infty}\mathbb{P}\{M=m\}\mathbb{E}\left[f''\left(\mu^{-\frac{1}{2}}\sum_{i=1}^{m-1}X_{i}+\mu^{-\frac{1}{2}}X_{m}^{L}\right)\right]\\
 & =\frac{2\mu}{\sigma^{2}}\sum_{m=1}^{\infty}\frac{\sigma^{2}}{\sigma_{m}^{2}}\mathbb{P}\{M=m\}\mathbb{E}\left[g_{m}(\mathbb{X})-g_{m-1}(\mathbb{X})\right]\\
 & =\frac{2}{\mathbb{E}[V^{2}]}\mathbb{E}\left[\sum_{m=1}^{\infty}\mathbb{P}\{N\geq m\}\left(g_{m}(\mathbb{X})-g_{m-1}(\mathbb{X})\right)\right]\\
 & =\frac{2}{\mathbb{E}[V^{2}]}\left(\mathbb{E}[g_{N}(\mathbb{X})]-g_{0}(\mathbb{X})\right)=\frac{2}{\mathbb{E}[V^{2}]}\left(\mathbb{E}[f(V)]-f(0)\right).
\end{align*}

Having shown that $V^{L}$ does in fact have the centered equilibrium distribution for $V$, we can apply Theorem \ref{1.2}
with $2B^{2}=\mathbb{E}[V^{2}]=\frac{\sigma^{2}}{\mu}$ to obtain
\begin{multline*}
d_{BL}(\mathscr{L}(V),\textnormal{{Laplace}}(0,B)) \leq\left(1+\frac{2}{B}\right)\mathbb{E}\left|V-V^{L}\right|\\
=\left(\mu^{-\frac{1}{2}}+\frac{\sqrt{8}}{\sigma}\right)\mathbb{E}\left|\left(X_{M}-X_{M}^{L}\right)+\textrm{sgn}(N-M)\sum_{i=(N\wedge M)+1}^{N\vee M}X_{i}\right|\\
\leq\left(\mu^{-\frac{1}{2}}+\frac{\sqrt{8}}{\sigma}\right)\left(\mathbb{E}\left|X_{M}-X_{M}^{L}\right|+\mathbb{E}\left|\sum_{i=(N\wedge M)+1}^{N\vee M}X_{i}\right|\right).
\end{multline*}

Finally, since the $X_{i}'s$ are independent with mean zero, the Cauchy-Schwarz inequality gives 
\begin{align*}
\mathbb{E}\left|\sum_{i=(N\wedge M)+1}^{N\vee M}X_{i}\right| & =\sum_{j=1}^{\infty}
\sum_{k=1}^{\infty}\mathbb{P}\{N\wedge M=j,\left|N-M\right|=k\}\mathbb{E}\left|\sum_{i=j+1}^{j+k}X_{i}\right|\\
 & \leq\sum_{j=1}^{\infty}\sum_{k=1}^{\infty}\mathbb{P}\{N\wedge M=j,\left|N-M\right|=k\}\mathbb{E}\left[\sum_{i=j+1}^{j+k}X_{i}^{2}\right]^{\frac{1}{2}}\\
 & \leq\sum_{j=1}^{\infty}\sum_{k=1}^{\infty}\mathbb{P}\{N\wedge M=j,\left|N-M\right|=k\}k^{\frac{1}{2}}\sup_{i\geq1}\sigma_{i}\\
 & =\sup_{i\geq1}\sigma_{i}\sum_{k=1}^{\infty}k^{\frac{1}{2}}\mathbb{P}\{\left|N-M\right|=k\}=\sup_{i\geq1}\sigma_{i}\mathbb{E}\left[\left|N-M\right|^{\frac{1}{2}}\right],
\end{align*}
and the proof is complete.
\end{proof}

\begin{rem*}
When the summands in Theorem \ref{4.3} have common variance $\sigma_{1}^{2}$, we have 
$\sigma^{2}=\mathbb{E}\left[\sum_{i=1}^{N}\sigma_{1}^{2}\right]=\mu\sigma_{1}^{2}$, so $M$ is defined by 
\[ \mathbb{P}\{M=m\}=\frac{\sigma_{m}^{2}}{\sigma^{2}}\mathbb{P}\{N\geq m\}=\frac{1}{\mu}\mathbb{P}\{N\geq m\}. \]
In other words, $M$ is the discrete equilibrium transformation of $N$ defined in \cite{PRR} for use in geometric approximation. 
Thus the $\mathbb{E}[\left|N-M\right|^{\frac{1}{2}}]$ term in the bound from Theorem \ref{4.3} may be regarded as measuring how close 
$\mathscr{L}(N)$ is to a geometric distribution. If the summands are i.i.d., then $X_{M}=_{d}X_{1}$ (as $M$ is assumed to be independent of all else), 
so Theorem \ref{1.2} shows that the $\mathbb{E}\left|X_{M}-X_{M}^{L}\right|$ term measures how close the distribution of the summands is to the Laplace. 
To put this into context, recall that if the $X_{i}'s$ are i.i.d. $\textrm{Laplace}(0,b)$ and $N\sim\textrm{Geometric}(p)$, then 
$p^{\frac{1}{2}}\sum_{i=1}^{N}X_{i}\sim\textrm{Laplace}(0,b)$.
\end{rem*}

We conclude our discussion with a proof of Theorem \ref{1.3}, which gives sufficient conditions for weak convergence in the setting of Theorem \ref{4.1}. 
Though it requires that the $X_{i}'s$ have uniformly bounded third absolute moments, the condition of symmetry is dropped and the identical distribution 
assumption is reduced to the requirement that the $X_{i}'s$ have common variance. This result is not quite as general as Theorem \ref{4.2}, 
but it does provide bounds on the error terms.

\begin{proof}[Proof of Theorem 1.3]
We are trying to bound the distance between the $\textrm{Laplace}(0,b)$ distribution and that of $p^{\frac{1}{2}}\sum_{i=1}^{N}X_{i}$ where $X_{1},X_{2},...$ are 
independent mean zero random variables with common variance $\mathbb{E}[X_{i}^{2}]=2b^{2}$ and uniformly bounded third absolute moments 
$\sup_{i\geq1}\mathbb{E}\left[\left|X_{i}\right|^{3}\right]=\rho<\infty$, and $N\sim\textrm{Geometric}(p)$ is independent of the $X_{i}'s$.

In the language of Theorem \ref{4.3}, we have $\mu=\frac{1}{p}$, $\sigma=b\sqrt{2\mu}$, and 
\[ \frac{\sigma_{m}^{2}}{\sigma^{2}}\mathbb{P}\{N\geq m\}=p\mathbb{P}\{N\geq m\}=p^{2}\sum_{i=m}^{\infty}(1-p)^{m-1}=p(1-p)^{m-1}=\mathbb{P}\{N=m\}, \]
so we can take $M=N$ to obtain 
\[ d_{BL}\left(\mathscr{L}\left(p^{\frac{1}{2}}\sum_{i=1}^{N}X_{i}\right),\textnormal{Laplace}(0,b)\right)
\leq\left(p^{\frac{1}{2}}+\frac{2p^{\frac{1}{2}}}{b}\right)\mathbb{E}\left|X_{N}-X_{N}^{L}\right|. \]

Applying Proposition \ref{3.4} gives
\begin{align*}
\mathbb{E}\left|X_{N}-X_{N}^{L}\right| & =\sum_{n=1}^{\infty}\mathbb{P}\{N=n\}\mathbb{E}\left|X_{n}-X_{n}^{L}\right|\\
 & \leq\sum_{n=1}^{\infty}\mathbb{P}\{N=n\}\left(\mathbb{E}\left|X_{n}\right|+\frac{1}{6b^{2}}\mathbb{E}\left[\left|X_{n}\right|^{3}\right]\right)\\
 & \leq\sum_{n=1}^{\infty}\mathbb{P}\{N=n\}\left(\mathbb{E}\left[X_{n}^{2}\right]^{\frac{1}{2}}+\frac{\rho}{6b^{2}}\right)=b\sqrt{2}+\frac{\rho}{6b^{2}}, 
\end{align*}
and the result follows
\end{proof}

\section*{Acknowledgements}

The authors would like to thank Larry Goldstein for suggesting the use of Stein's method to get convergence rates in the 
general setting of Theorem \ref{4.2} and for many helpful comments throughout the preparation of this paper. Thanks also to 
Alex Rozinov for several illuminating conversations concerning the material in Section 4 and to the anonymous referees whose 
careful notes were of great help to us. 

\bibliographystyle{plain}
\bibliography{SteLap}

\end{document}